\documentclass[a4paper,xdvi,twoside,10pt,headsepline]{scrartcl}

\usepackage[T1]{fontenc}
\usepackage[latin1]{inputenc}
\usepackage{scrpage2}
\usepackage{graphicx}
\usepackage{multirow}
\usepackage{amsmath, amsthm, amsfonts, amssymb}
\usepackage{mathbbol}
\usepackage{stmaryrd}
\usepackage[arrow, matrix, curve, xdvi, dvips]{xy}
\usepackage{geometry}
\geometry{a4paper,left=40mm,right=30mm, top=1cm, bottom=2cm}


\setlength{\topmargin}{0mm}
\setlength{\textheight}{21cm}

\pagestyle{scrheadings}
\clearscrheadfoot
\automark[section]{section}
\cehead{\leftmark}
\cohead{\rightmark}
\ofoot[]{\pagemark}

\theoremstyle{plain}
\newtheorem{theorem}           {Theorem} [section]
\newtheorem{lemma}    [theorem]{Lemma}
\newtheorem{cor}      [theorem]{Corollary}
\newtheorem{prop}     [theorem]{Proposition}

\newtheorem*{thm2}             {Theorem \ref{thm2}}

\theoremstyle{definition}

\theoremstyle{remark}
\newtheorem{rem}      [theorem]{Remark}

\numberwithin{equation} {section}
\numberwithin{theorem}  {section}

\newcommand         {\catC}          {\ensuremath{\mathcal C}}                  
\newcommand					{\catT}					 {\ensuremath{{\bf{\mathcal{T}}}}}  					

\DeclareMathOperator{\rad}         {rad}
\DeclareMathOperator{\soc}         {soc}
\DeclareMathOperator{\Ob}          {Ob}
\DeclareMathOperator{\annl}        {ann_{\mathit{l}}}



\newcommand         {\N}             {\ensuremath{\mathbb{N}}}
\newcommand         {\Z}             {\ensuremath{\mathbb{Z}}}

\newcommand         {\id}            {\ensuremath{\mathrm{id}}}
\newcommand         {\Id}            {\ensuremath{\mathrm{Id}}}
\newcommand         {\ch}            {\ensuremath{\mathrm{char}}}


\newcommand					{\m}							{{\ensuremath{{{\mathfrak{m}}}}}}


\newcommand					{\catP}					 {\ensuremath{{\bf{\mathcal{P}}}}}
\newcommand					{\catPfg}				{\ensuremath{{\bf{\mathcal{P}}}^{\text{fg}}}}


\newcommand         {\HOM}      [2] {\ensuremath{{{Hom_R({#1},{#2})}}}}

\newcommand         {\ds}       [1] {\bigoplus\limits_{#1}R}
\newcommand         {\W}				[1] {\ensuremath{\mathcal{W}({#1})}}
\newcommand         {\Wi}				[1] {\ensuremath{\mathcal{W}_2({#1})}}
\newcommand         {\F}  			[1] {\ensuremath{\mathbb{F}_{#1}}}

 \begin{document}

\title{Triangulated Structures for projective Modules}
\author{Boryana Dimitrova}
\date{\today}
\maketitle

\begin{abstract}
Abstract: We give a characterisation of those local not necessary commutative rings, for which the category of projective modules admits a triangulation with the identity as translation functor. By "admits a triangulation" we mean that the category can be given the structure of a triangulated category that satisfies the standard set of axioms including the octahedral axiom.
\end{abstract}

\section{Introduction}
Among categories of modules that admit triangulated structure, the simplest example is probably that of $k$-vector spaces for a field $k$. Here the translation functor is the identity and distinguished triangles are given by sequences of the form $$\xymatrix{{X}\ar^-{u}[r]&{Y}\ar^-{v}[r]&{Z}\ar^-{w}[r]&{X}},$$ such that $$\xymatrix{{X}\ar^-{u}[r]&{Y}\ar^-{v}[r]&{Z}\ar^-{w}[r]&{X}\ar[r]^-{u}&{Y}}$$ is an exact sequence of $k$-modules. An alternative description is to say that the distinguished triangles are precisely the contractible triangles, as defined by Neeman in \cite[1.3.5]{Neeman}. It is almost immediately clear that all the axioms are satisfied thanks to the fact that all $k$-modules are projective. This example can be generalized without much effort: we adopt the analogous definition of distinguished triangles to the category of $R$-modules for any ring $R$, such that every module is projective, and we indeed obtain a triangulated structure for this category with translation functor the identity. What kind of rings have this property? The answer can be given straightforward - all modules being projective is equivalent to all modules being injective. In particular all submodules of the module $R$ are direct summands, which implies that $R$ is semisimple. On the other hand, all modules over a semisimple ring are projective. The Artin-Wedderburn theorem gives us a precise characterization of the semisimple rings: They are of the form $R=R_1\times R_2\times\cdots\times R_n\,,$ where $R_i$ is a full matrix ring $M_{n_i}(d_i)$ over some skew field $d_i$ for every $i$ from $1$ to $n$.\\
Looking at this example it seems reasonable to try to give a triangulation for a category of projective modules over more general rings by just taking exact sequences of modules. It is in fact true that if such a category admits a triangulation then the distinguished triangles are exact as a sequence of modules, but one should be aware of the fact that conversely, this naive construction does not produce a triangulation in general. For a counterexample one can think of the category of projective $\Z$-modules: clearly there is no exact sequence of projective (thus free) $\Z$-modules of the required form $$\xymatrix{{\Z}\ar^-{\cdot 3}[r]&{\Z}\ar[r]&{?}\ar[r]&{\Z}\ar[r]^-{\cdot 3}&{\Z}},$$ starting with the map $\Z\stackrel{\cdot 3}{\rightarrow}\Z$, so the first axiom of a triangulated category can not be satisfied. This is just one of the problems that may occur in the above construction when not all modules over the given ring are projective. Actually, there are plenty of examples of rings for which the category of projective modules does not admit a triangulation at all. Indeed, a necessary condition for this is that the ring is Quasi-Frobenius. However, it is by no means sufficient, and so the question for a characterization of rings which allow a triangulation of their categories of projective modules arises. In \cite{HL} an answer is given in the case where the considered rings are commutative, and so our aim is to generalize this result to not necessarily commutative rings. We are though going to restrict our attention to local rings. Denote by $\catP$ and $\catPfg$ the categories of projective left $R$-modules and finitely generated projective left $R$-modules. Our main theorem is:

\begin{thm2}
Let $R$ be a local ring with maximal ideal $\m$. Then there exists a triangulation for $\catP$, or $\catPfg$ respectively, with $\Sigma=\Id$ if and only if $\m^2=0$, $\m = Rx = xR\,\,\,\text{for all}\,\, x \in \m \backslash \{0\}$ and in addition one of the following conditions holds:
\begin{enumerate}
\item $R$ is a skew field, i.e., $\m=\{0\}$
\item\label{thm2,2}$\m=2R$
\item\label{thm2,3}$\ch R=2$, i.e., $1+1=0$ in $R$, and for some $x \in \m \backslash \{0\}$ there is a nontrivial element $r_x\in R/\m$ such that
\begin{enumerate}
\item $\sigma_x(r_x) =r_x$ \,\,\,
\item $\sigma_x^3(t)=r_x^{-1}tr_x$\,\,\,\,for every $t\in R/\m$
\end{enumerate}
\end{enumerate}
\end{thm2}
\cehead[]{\leftmark}
\cohead[]{\rightmark}
Here $\sigma_x$ is a certain automorphism of the skew field $R/\m$ that depends on the chosen generator $x$ of $\m$ (Lemma \ref{Sigma}).
In the proof of these results we are going to construct an explicit triangulation for the different cases, using an idea from the article of F. Muro, S. Schwede, N. Strickland \cite{MSS}. Interesting questions that come up are whether there are different triangulations in the considered cases and - if so - how they look like. We will also give answers to these, for the most part.\\

{\bf Acknowledgements:}
\\
This article is a revised version of my diploma thesis, written at the University of Bonn. I would like to thank my advisor Stefan Schwede for the interestic topic and the encouragement throughout the way, as well as the German Academic Exchange Service for the financial support during the entire time of my studies.\\

Now, let us start with some
\section{Preliminaries}
We begin by fixing some notation and recollecting the needed terminology.\\
Let $\catC$ be an additive category and $\Sigma: \catC\rightarrow \catC$ an equivalence. A \emph{triangle} in $\catC$ (with respect to $\Sigma$) is a diagram in $\catC$ of the form $$\xymatrix{{X}\ar^-{u}[r]&{Y}\ar^-{v}[r]&{Z}\ar^-{w}[r]&{\Sigma X}}$$
such that the composites $vu$, $wv$ and $(\Sigma u)w$ are the zero morphisms. A \emph{morphism of triangles} is then a commutative diagram of the form
$$\xymatrix{{X}\ar^-{u}[r]\ar^-{f}[d]&{Y}\ar^-{v}[r]\ar^-{g}[d]&{Z}\ar^-{w}[r]\ar^-{h}[d]&{\Sigma X}\ar^-{\Sigma f}[d]\\{X'}\ar^-{u'}[r]&{Y'}\ar^-{v'}[r]&{Z'}\ar^-{w'}[r]&{\Sigma X'.}}$$
In this context we also have the notion of a \emph{mapping cone} which is analogous to the definition of the mapping cone of a chain map, namely, we define the mapping cone of the above morphism to be the triangle
$$\xymatrix@C=15mm{{Y \oplus X'}\ar^-{\left(\begin{smallmatrix}-v&0\\g&u'\end{smallmatrix}\right)}[r]&{Z \oplus Y'}\ar^-{\left(\begin{smallmatrix}-w&0\\h&v'\end{smallmatrix}\right)}[r]&{\Sigma X \oplus Z'}\ar^-{\left(\begin{smallmatrix}-\Sigma u&0\\ \Sigma f&w'\end{smallmatrix}\right)}[r]&{\Sigma Y \oplus \Sigma X'.}}$$
Now, a \emph{triangulated category} $\catT$ is an additive category together with a self\-equivalence $\Sigma: \catT\rightarrow \catT$ and a class of triangles (with respect to $\Sigma$), called distinguished, such that:\\
\\
\begin{tabular}{ccl}
A.1	&$\cdot$&The class of distinguished triangles is closed under isomorphisms of triangles.\\
&$\cdot$&The triangle $\xymatrix{{X}\ar^-{\id}[r]&{X}\ar[r]&{0}\ar[r]&{\Sigma X}}$ is distinguished for every $X \in \Ob (\catT)$.\\
&$\cdot$&For any morphism $u:X\rightarrow Y$ in $\catT$ there exists a distinguished triangle of the form\\ &&{$\xymatrix{{X}\ar^-{u}[r]&{Y}\ar^-{}[r]&{Z}\ar^-{}[r]&{\Sigma X}}$}.\\
\\
A.2 &$\cdot$& A triangle $\xymatrix{{X}\ar^-{u}[r]&{Y}\ar^-{v}[r]&{Z}\ar^-{w}[r]&{\Sigma X}}$ is distinguished iff its translate \\&& $\xymatrix{{Y}\ar^-{v}[r]&{Z}\ar^-{w}[r]&{\Sigma X}\ar^-{-\Sigma u}[r]&{\Sigma Y}}$ is.\\
\\
A.3&$\cdot$& For any commutative diagram with distinguished rows of the form\\ 
&&
\begin{minipage}[t]{60mm}
$\xymatrix{{X}\ar^-{u}[r]\ar^-{f}[d]&{Y}\ar^-{v}[r]\ar^-{g}[d]&{Z}\ar^-{w}[r]&{\Sigma X}\ar^-{\Sigma f}[d]\\{X'}\ar^-{u'}[r]&{Y'}\ar^-{v'}[r]&{Z'}\ar^-{w'}[r]&{\Sigma X',}}$
\end{minipage}
\begin{minipage}[t]{70mm}
there is a morphism $h:Z\rightarrow Z'$ completing the diagram to a morphism of triangles, in a way that the mapping cone is distinguished.
\end{minipage}
\end{tabular}
\\
\\
This set of axioms is equivalent to the common one including the octahedral axiom as shown by Neeman in \cite[1.4.6]{Neeman} and \cite[1.8]{Nee2}, and it will turn out to be more convenient for our purpose.\\
Further, two maps of triangles will be called \emph{homotopic} if they differ by a \emph{homotopy}
$$\xymatrix@=2cm{{X}\ar^-{u}[r]\ar@<1ex>[d]^-{f'}\ar@<-1ex>[d]_-{f}&{Y}\ar_-{\Theta}[dl]\ar^-{v}[r]\ar@<1ex>[d]^-{g'}\ar@<-1ex>[d]_-{g}&{Z}\ar_-{\Phi}[dl]\ar^-{w}[r]\ar@<1ex>[d]^-{h'}\ar@<-1ex>[d]_-{h}&{\Sigma X}\ar@<1ex>[d]^-{\Sigma f'}\ar@<-1ex>[d]_-{\Sigma f}\ar_-{\Psi}[dl]\\{X'}\ar^-{u'}[r]&{Y'}\ar^-{v'}[r]&{Z'}\ar^-{w'}[r]&{\Sigma X'.}}$$
This means there are morphisms $\Theta$, $\Phi$ and $\Psi$ as in the diagram such that
\begin{eqnarray*}
g-g'&=&\Phi v + u' \Theta\\
h-h'&=&\Psi w + v' \Phi\\
\Sigma(f-f')&=&\Sigma(\Theta u) + w'\Psi.
\end{eqnarray*}
As one should expect homotopic morphisms have isomorphic mapping cones. For a proof of this fact see \cite[1.3.3]{Neeman}. Another notion that we are going to make use of are the \emph{contractible triangles}. We call a triangle contractible if the identity map is homotopic to the zero map. A homotopy between both maps is called a \emph{nullhomotopy}. Later on we are going to use the facts that a contractible triangle in a triangulated category is always distinguished, and that a morphism having a contractible triangle as source or target is homotopic to the zero map (\cite{Neeman} Proposition 1.3.8 and Lemma 1.3.6).\\
Coming back to what we originally wanted to do, we set the following \underline{notation}: $\catP$ will denote the category of projective $R$-modules and $\catPfg$ the category of finitely generated projective $R$-modules, where $R$ is a (not necessarily commutative) ring. Later on in this section $R$ will be assumed to be a local ring. When we write $R$-modules we will mean left $R$-modules and by an ideal (without adjective) we will understand a two-sided ideal. As usual \HOM{P}{Q} denotes the set of all $R$-module homomorphisms between the $R$-modules $P$ and $Q$. As we mentioned in the introduction, Hovey and Lockridge gave a complete characterization of the commutative rings for which $\catP$ and $\catPfg$ admit a triangulation with $\Sigma=\Id$. Some of their preliminary results apply also in the general case and we are going to use them.\\
Recall that a ring is called \emph{Quasi-Frobenius} (short QF) if every injective $R$-module is projective or equivalently every projective $R$-module is injective. There are many other characterizations of QF rings, cf. \cite[15.1]{Lam2} and \cite[24.20]{Faith}. In \cite[3.4]{HL} it is shown that if $\catP$ admits a triangulation, then $R$ is a QF ring. This is an easy consequence of the fact that every distinguished triangle in $\catP$ (or also $\catPfg$) is exact, since the functor $\HOM{R}{-}$ converts a given distinguished triangle into an exact sequence of $R$-modules isomorphic to the original triangle, and for this reason every injective module can be embedded into a projective one, thus, is projective itself:
$$\xymatrix{
{P}\ar^-{f}[r]&{Q}\ar@{->>}[d]\ar[r]&{T}\ar[r]&{\Sigma {P}}\\
&{M}\ar@{^(->}[ur]
}$$
Here, $\xymatrix{{P}\ar^-{f}[r]&{Q}\ar@{->>}[r]&{M}\ar[r]&{0}}$ is a presentation of the injective module $M$ by projective modules.\\
A very similar argument \cite[3.5]{HL} provides that if $\catPfg$ admits a triangulation then $R$ is a left and right IF ring. Recall that $R$ is said to be a left respectively right IF ring if all injective left respectively right $R$-modules are flat. Adding the condition that $R$ is left or right noetherian implies that $R$ is even a QF ring (\cite{Colby}).\\
Now, as mentioned in the beginning we are going to direct our attention to local rings. The reason for this is that only in this case we have sufficient information about the structure of QF rings so that it is possible to extract enough information for a possible triangulation. In the commutative case an arbitrary QF ring is a product of local QF rings, and so, considering only the local case is not really a restriction. We should specify what we mean by a \emph{local} ring. $R$ is defined to be local if there is a unique maximal left ideal. This turns out to be the same as requiring a unique maximal right ideal (that indeed coinsides with the maximal left ideal) or also asking for $R/\rad R$ to be a skew field (see for example \cite{Lam}). Note, that this is in general not the same as having a maximal two-sided ideal. From now on $R$ will denote a local ring and in the light of this we introduce the following \underline{additional notation}:
\begin{center}
\begin{tabular}{cccl}
&&$\m$& the unique maximal ideal of $R$\\
&&$d:=R/\m $& the residue skew field of R\\
&&$[r]\in d$& the residue class of an element $r$ of $R$\\
&&$\tilde{t}\in R$& a lift of an element $t$ of $d$\\
&&$R^\times$& the units in $R$\\
&&$Z(R)$& the center of $R$\\
&&$Z(d)$& the center of $d$\\
\end{tabular}\end{center}
By a theorem of Kaplansky \cite[2]{Kaplansky} we know that the projective modules over a local ring are exactly the free modules, thus, we are going to work in categories of free respectively finitely generated free modules.

\section{Main results} 
Before beginning with the proof of the central theorem we have to do some preparations. Throughout, we are going to differentiate between $\catP$ and $\catPfg$ only if necessary. First, we deduce some restrictions on the structure of the rings $R$ that might occur, provided $\catP$ or $\catPfg$ is triangulated. The following proposition can also be found in \cite{HL}, but for our further work it is reasonable to recall the proof.
\begin{prop}\label{thmPID}
If the category of projective modules $\catP$ over a local ring $R$ or the category of finitely generated projective modules $\catPfg$ over a local left or right noetherian ring $R$ admits a triangulation with translation functor $\Sigma=\Id$, then the following properties hold for the maximal ideal $\m$ of $R$:
\begin{enumerate}
\item $\m^2 = \{0\}$
\item $\m = Rx = xR\,\,\,\text{for all}\,\, x \in \m \backslash \{0\}$
\end{enumerate}
\end{prop}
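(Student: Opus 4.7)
The plan is to take a specific non-trivial morphism in the category, namely right multiplication by a non-zero $x \in \m$, turn it into a distinguished triangle, and extract from the forced freeness of the third vertex a rigid length constraint. I will use throughout that $R$ is Quasi-Frobenius and hence artinian: this was established for $\catP$ in the introduction, and in the $\catPfg$-case it follows from the left/right IF property combined with the one-sided noetherian hypothesis recalled there. Set $N := \ell_R(R) < \infty$ and assume $\m \neq 0$ (otherwise both claims are vacuous).

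\textbf{Step 1 (the key triangle).} For $x \in \m \setminus \{0\}$ I would apply axiom A.1 to the morphism $\rho_x \colon R \to R$, $r \mapsto rx$, producing a distinguished triangle $R \xrightarrow{\rho_x} R \to Z \to R$. By the argument sketched in the introduction (apply $\HOM{R}{-}$), this triangle is exact as a sequence of $R$-modules. Kaplansky's theorem makes the projective module $Z$ free, and since $Z$ is sandwiched between the finitely generated modules $R/Rx$ and $\annl(x)$ it is itself f.g., so $Z \cong R^n$ for some $n \ge 0$. Unwinding the periodic exact sequence yields the short exact sequence
$$0 \to R/Rx \to Z \to \annl(x) \to 0.$$

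\textbf{Step 2 (length bookkeeping).} Using the QF-type isomorphism $R/\annl(x) \cong Rx$, both outer terms have length $N - \ell_R(Rx)$, so $nN = \ell_R(Z) = 2(N - \ell_R(Rx))$. As $x$ is neither zero nor a unit, $0 < \ell_R(Rx) < N$, which forces $n = 1$ and $\ell_R(Rx) = N/2$ for every $x \in \m \setminus \{0\}$. I would then show $Rx = Ry$ for all non-zero $x, y \in \m$: otherwise, modularity of length gives $\ell_R(Rx + Ry) = N - \ell_R(Rx \cap Ry) \le \ell_R(\m) = N-1$, so $Rx \cap Ry$ is non-zero, and any $0 \neq z \in Rx \cap Ry$ has $\ell_R(Rz) = N/2$, filling up both $Rx$ and $Ry$ and forcing $Rx = Rz = Ry$, a contradiction. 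Hence $\m = Rx$ for every non-zero $x \in \m$, in particular $\ell_R(\m) = N/2$, and together with $\ell_R(R) = \ell_R(\m) + 1$ this yields $N = 2$. The left ideals of $R$ are therefore exactly $0, \m, R$.

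\textbf{Step 3 (passage to right ideals via QF duality).} Here I would exploit that, over a QF ring, the maps $L \mapsto \annr(L)$ and $K \mapsto \annl(K)$ are mutually inverse inclusion-reversing bijections between the lattices of left and right ideals. Applied to the three left ideals this produces exactly three right ideals, namely $R, \annr(\m), 0$. Since $\m$ is two-sided and is proper and non-zero, it is itself a right ideal in this list and the only admissible choice is $\m = \annr(\m)$. This single identity packs in both conclusions simultaneously: the containment $\m \subseteq \annr(\m)$ says precisely $\m^2 = 0$, proving (i); and $\m$ being the unique proper non-zero right ideal says $xR = \m$ for every $x \in \m \setminus \{0\}$, which together with the left-sided statement from Step 2 is (ii).

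The main obstacle is the length bookkeeping of Step 2: one has to verify that $Z$ is finitely generated so that $\ell_R(Z) = nN$ is meaningful, correctly line up lengths via $R/\annl(x) \cong Rx$ to pin down $n=1$, and then run the ``same middle length forces equal principal ideals'' cancellation. This cancellation is the main new ingredient compared to the commutative argument in \cite{HL}, where the left-right distinction is absent; here the passage between sides is outsourced cleanly to QF duality in Step 3.
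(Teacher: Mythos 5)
Your proof is correct, and its skeleton coincides with the paper's: apply A.1 to $\cdot\, x$, use exactness of distinguished triangles and Kaplansky's theorem to get a short exact sequence $0\to R/Rx\to R^n\to \annl(x)\to 0$, and let the length count force $n=1$ and $\mathrm{l}(R)=2$. The difference lies in the supporting ring theory. The paper first invokes Nicholson--Yousif's structure results for local QF rings: $\soc R$ is simple as a left \emph{and} right module (so $\soc R=Rx=xR$ for any nonzero $x\in\soc R$) and $\annl(\soc R)=\m$. Choosing $x$ to be a socle generator then makes the bookkeeping immediate ($\mathrm{l}(R/Rx)=\mathrm{l}(R)-1$, $\mathrm{l}(\ker(\cdot x))=\mathrm{l}(\m)=\mathrm{l}(R)-1$), yields $\mathrm{l}(R)=2$, hence $\soc R=\m$, and both (i) and (ii) drop out of the two-sided simplicity of the socle. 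You instead take an arbitrary nonzero $x\in\m$, compute $\mathrm{l}(Rx)=N/2$ via $R/\annl(x)\cong Rx$, and need the extra modularity/cancellation step to identify all the $Rx$ and pin down $N=2$; the right-handed half of (ii) and the vanishing of $\m^2$ are then outsourced to the double-annihilator bijection for QF rings rather than to the socle's right-simplicity. Both routes are sound; yours trades the citation of the socle structure theorem for the (equally standard) annihilator duality plus a short lattice argument, and has the mild advantage of not needing to know in advance which $x$ generates the socle. One small presentational point: the isomorphism $R/\annl(x)\cong Rx$ is just the first isomorphism theorem for $r\mapsto rx$, not anything QF-specific, so you may want to relabel it.
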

\begin{proof}
Without loss of generality $\m$ is not the zero ideal. As we saw, under the assumptions of the theorem the ring $R$ is QF, and further it is also assumed to be local. Hence by \cite[3.9]{Nicholson}, $\soc R_R= \soc_RR$ (write $\soc R$ from here on) is a simple left and right module. In other words, $\soc R$ is an ideal that is both simple as a left and as a right $R$-module. Clearly, $\soc R$ is generated by one element, and we have $\soc R = Rx = xR$ for any $x\in \soc R\backslash\{0\}$.\\
We want to prove that $\m$ equals $\soc R$. Let us fix some nontrivial, thus generating element $x$ of $\soc R$. Because of the triangulation there is a distinguished triangle starting with the map $\cdot x$ (right multiplication by the element $x$). This is of the form
$\xymatrix@!C=5mm{{R}\ar[r]^-{\cdot x}&{R}\ar[r]^-{f}&{P}\ar[r]^-{g}&{R},}$
where $P$ is some free module. Since the triangle is exact, we have a short exact sequence of $R$-modules:
$$\xymatrix{{0}\ar[r] &{R/Rx}\ar[r]&{P}\ar[r] &{\text{ker}(\cdot x)}\ar[r]&{0}}$$ 
Note that ker$(\cdot x)$ equals the left annihilator $\annl(xR)=\annl(\soc R)$ of the socle of $R$, and that is  precisely $\m$ (see for exemple \cite[8.11]{Nicholson}). We recall that since $R$ is left artinian and left noetherian (because of QF), all finitely generated left $R$-modules have finite length, so we conclude that $P$ also has finite length and we have:
$$\text{l}(P)=\text{l}(\m)+\text{l}(R/Rx)=\text{l}(\m)+\text{l}(R)-1\,\,\,\,(\star)$$
Since $\m$ is maximal, $\text{l}(R/\m)$ is simple and so $\text{l}(\m)+1=\text{l}(R)$. Further, the length of $P$ is a multiple of the length of $R$. Coming back to $(\star)$ the above considerations apply: $k\text{l}(R)=\text{l}(R)-1+\text{l}(R)-1=2\text{l}(R)-2$. Thus, $k$ equals $1$ and l$(R)=2$. Therefore, $0\subset\soc R\subset R$ is already a composition series for $R$ and $\soc R$ equals $\m$.
It is now an easy calculation that $\m^2$ is trivial.
\end{proof}
\begin{rem}\label{triangles thmPID}
We observe that under the assumptions of the proposition a distinguished triangle for the morphism $\cdot x$ for some nontrivial element $x$ of $\soc R=\m$ is given by 
$$\xymatrix{{R}\ar[r]^-{\cdot x}&{R}\ar[r]^-{\cdot s}&{R}\ar[r]^-{\cdot t}&{R}},$$
where $s$ and $t$ are elements of $R$. Since $x$ is nontrivial, $\cdot s$ and $\cdot t$ can not be isomorphisms. Because $x$ is not a unit, they can not be trivial, either. Hence, $s$ and $t$ are elements of $\m\backslash\{0\}$. As such we can represent them in the form $s=xr_1$ and $t=r_2x$ for some appropriate units $r_1$ and $r_2$. Then we have an isomorphism of triangles
$$\xymatrix{
{R}\ar[r]^-{\cdot x}\ar@{=}[d]&{R}\ar@{=}[d]\ar[r]^-{\cdot xr_1}&{R}\ar[d]^-{\cdot r_1^{-1}}\ar[r]^-{\cdot r_2x}&{R}\ar@{=}[d]\\
{R}\ar[r]^-{\cdot x}&{R}\ar[r]^-{\cdot x}&{R}\ar[r]^-{\cdot \bar{r}x}&{R},}$$ where $\bar{r}={r_1r_2}$ is also a unit.
\end{rem}
Another consequence of the proposition is the following  

\begin{lemma}\label{Sigma}
Every $x\in\m\backslash\{0\}$ defines an automorphism $\sigma_x:d\rightarrow d$ of the residue skew field $d=R/\m$, with the property  $x\widetilde{t}=\widetilde{\sigma_x(t)}x \text{ for all } t\in d.$ 
\end{lemma}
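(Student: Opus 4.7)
The plan is to build $\sigma_x$ directly from the equality $\m=Rx=xR$ given by Proposition \ref{thmPID}. For any $x\in\m\setminus\{0\}$ and any lift $\widetilde{t}\in R$ of $t\in d$, the product $x\widetilde{t}$ lies in $xR=\m=Rx$, so we can write $x\widetilde{t}=sx$ for some $s\in R$ and then set $\sigma_x(t):=[s]$. The whole lemma reduces to checking that this rule is well-defined, multiplicative, additive, and bijective.

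For well-definedness I would split into two checks. First, independence of the choice of $s$: if $sx=s'x$, then $(s-s')x=0$, so $s-s'\in\annl(x)$. Because $x$ generates $\m$ as a right ideal, $\annl(x)=\annl(xR)=\annl(\m)$, and the identities $\m^2=0$ together with the fact that elements outside $\m$ are units force $\annl(\m)=\m$. Hence $s-s'\in\m$, so $[s]=[s']$. Second, independence of the lift: if $\widetilde{t}'$ is another lift, then $\widetilde{t}-\widetilde{t}'\in\m$, so $x(\widetilde{t}-\widetilde{t}')\in x\m\subseteq\m^2=0$, and the same $s$ may be used. Note that the identity $x=1\cdot x$ also gives $\sigma_x(1)=1$.

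Additivity and multiplicativity are then almost automatic. Using the relation $x\widetilde{t_i}=\widetilde{\sigma_x(t_i)}\,x$, one computes
\[
x(\widetilde{t_1}+\widetilde{t_2})=\bigl(\widetilde{\sigma_x(t_1)}+\widetilde{\sigma_x(t_2)}\bigr)x,\qquad
x(\widetilde{t_1}\widetilde{t_2})=\widetilde{\sigma_x(t_1)}\,\widetilde{\sigma_x(t_2)}\,x,
\]
and since $\widetilde{t_1}+\widetilde{t_2}$ and $\widetilde{t_1}\widetilde{t_2}$ are valid lifts of $t_1+t_2$ and $t_1t_2$, the defining relation yields $\sigma_x(t_1+t_2)=\sigma_x(t_1)+\sigma_x(t_2)$ and $\sigma_x(t_1t_2)=\sigma_x(t_1)\sigma_x(t_2)$.

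Finally, bijectivity. Injectivity is free: $\sigma_x$ is a nonzero ring homomorphism (it sends $1$ to $1$) from the skew field $d$ to itself, and every such map is injective. For surjectivity, given any $s\in R$ we use $Rx=xR$ to write $sx=x\widetilde{t}$ for some $\widetilde{t}\in R$; then $\sigma_x([\widetilde{t}])=[s]$ by construction. The only subtle step in the whole argument is the well-definedness check, specifically the identification $\annl(x)=\m$, which is where both assertions of Proposition \ref{thmPID} are used simultaneously.
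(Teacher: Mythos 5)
Your proposal is correct and follows essentially the same approach as the paper: define $\sigma_x$ via $x\widetilde{t}=sx$ using $Rx=xR$, check well-definedness via $\annl(x)=\m$, and verify the ring-homomorphism identities by direct computation. The only cosmetic difference is in bijectivity, where the paper constructs $\sigma_x^{-1}$ explicitly from $\tilde{t}x=x\widetilde{\sigma_x^{-1}(t)}$, while you deduce injectivity from the general fact that a unital ring homomorphism out of a skew field is injective and check surjectivity directly; both are equally valid, and you also spell out the $\annl(\m)=\m$ step a bit more explicitly than the paper, which simply inherits it from the proof of Proposition \ref{thmPID}.
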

\begin{proof}
We define $\sigma_x$\, via the property $x\tilde{t}=\widetilde{\sigma_x(t)}x$: Let $t$ be an element of $d$. Since $Rx=xR$, for a lift $\tilde{t}$ of $t$ we have an element $s$ of $R$ with the property $sx=x\tilde{t}$. We set $\sigma_x(t)=[s]\in d$.\\This assignment is well defined: Two lifts $\tilde{t}$ and $\bar{t}$ differ by an element $m\in \m$, and since $xm=0$, we have $x\bar{t}=x(\tilde{t}+m)=x\tilde{t}$. If $\bar{s}$ also satisfies $x\tilde{t}=\bar{s}x$, then $\bar{s}x=sx$, i.e., $(\bar{s}-s)x=0$. Hence, $(\bar{s}-s)$ is in $\m$, and therefore $[\bar{s}]$ equals $[s]$.\\
Next, we have to show that $\sigma_x$ is a skew field homomorphism: For $t_1$ and $t_2$ in $d$, $\widetilde{t_1}+\widetilde{t_2}$ is a lift of $t_1+t_2$. If $s_1$ and $s_2$ are elements with $s_ix=x\widetilde{t_i}$ for $i=1,2$, then $(s_1+s_2)x=x(\widetilde{t_1}+\widetilde{t_2})$. Now $[s_1+s_2]=[s_1]+[s_2]$ implies we have a homomorphism of groups with respect to $+$. Similarly for multiplication, $\widetilde{t_1}\widetilde{t_2}$ is a lift of $t_1t_2$. We have  $x\widetilde{t_1}\widetilde{t_2}=s_1x\widetilde{t_2}=s_1s_2x$, and hence $\sigma_x(t_1t_2)=[s_1s_2]=[s_1][s_2]=\sigma_x(t_1)\sigma_x(t_2)$. It is clear that $\sigma_x(1)=1$.\\
Analogously, we can construct ${\sigma_x}^{-1}$ via the property $\tilde{t}x=x\widetilde{\sigma_x^{-1}(t)}$ for every $t\in d$.
\end{proof}
\begin{rem}\label{relation between sigma's}
One can ask what the connection between two such automorphisms $\sigma_x$ and $\sigma_y$ is. Since we can write $y=x\tilde{r}$ for an appropriate $r\in d$, we observe for any $t\in d$:
\begin{eqnarray*} \widetilde{\sigma_y(t)}y=y\tilde{t}&\Leftrightarrow&\widetilde{\sigma_y(t)}x\tilde{r}=x\tilde{r}\tilde{t}\\&\Leftrightarrow&\widetilde{\sigma_y(t)}\widetilde{\sigma_x(r)}x=x\tilde{r}\tilde{t}\\&\Leftrightarrow&\sigma_y(t)\sigma_x(r)=\sigma_x(rt)\end{eqnarray*} 
Thus, $\sigma_y(t)$ is equal to $\sigma_x(r)\sigma_x(t)(\sigma_x(r))^{-1}$. In case $\sigma_x(r)$ is in the center of $d$, we have an equality between $\sigma_x$ and $\sigma_y$. In particular, if $d$ is commutative all nontrivial elements of $\m$ define the same automorphism.
\end{rem}

Now we continue with the main result:

\begin{theorem}\label{thm2}
Let $R$ be a local ring that satisfies properties (i) and (ii) of Proposition \ref{thmPID} and $\m$ its maximal ideal. Then there exists a triangulation for $\catP$ (resp. $\catPfg$) with $\Sigma=\Id$ if and only if one of the following conditions hold:
\begin{enumerate}
\item $R$ is a skew field, i.e., $\m=\{0\}$
\item\label{thm2,2}$\m=2R$
\item\label{thm2,3}$\ch R=2$, i.e., $1+1=0$ in $R$, and for some $x \in \m \backslash \{0\}$ there is a nontrivial element $r_x\in R/\m$ such that
\begin{enumerate}
\item $\sigma_x(r_x) =r_x$ \,\,\,
\item $\sigma_x^3(t)=r_x^{-1}tr_x$\,\,\,\,for every $t\in R/\m$
\end{enumerate}
\end{enumerate}
We are going to call these three cases the "semisimple", the "mixed-characteristic" and the "equicharacteristic" case, respectively. 
\end{theorem}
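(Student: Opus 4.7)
The plan is to prove necessity and sufficiency separately. For the necessity direction, under the standing hypotheses $\m^2=0$ and $\m=Rx=xR$ from Proposition \ref{thmPID}, Remark \ref{triangles thmPID} shows any distinguished triangle on $\cdot x : R\to R$ is isomorphic to
\[
R \xrightarrow{\cdot x} R \xrightarrow{\cdot x} R \xrightarrow{\cdot \bar{r}x} R
\]
for some unit $\bar{r}\in R$. I would then squeeze constraints on $[\bar{r}]\in d$ and on $\sigma_x$ out of axioms A.2 and A.3. Applying A.3 to an endomorphism of this canonical triangle whose first component is right multiplication by an arbitrary unit $a\in R$, and comparing the residue classes modulo $\m$ that squares 2 and 3 independently force on the completing morphism, should yield $\sigma_x([a]) = [\bar{r}]^{-1}[a][\bar{r}]$ for every $a$; so $\sigma_x$ is inner conjugation by $[\bar{r}]$. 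A second application of A.3, now to a morphism from the canonical triangle to its rotate (distinguished by A.2), should produce the further relation $[\bar{r}]^2[a] + [a][\bar{r}]^2 = 0$ in $d$ for all $a$, whose specialization at $a=1$ forces $2=0$ in $d$. Hence $2\in\m$, which leaves only $\ch R=4$ with $\m=2R$ (case (ii)) or $\ch R=2$ (case (iii)); in the latter case, iterating the rotation/completion procedure should pin down an element $r_x\in d$ satisfying $\sigma_x(r_x)=r_x$ and $\sigma_x^3(t)=r_x^{-1}tr_x$.

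For the sufficiency direction, in each case I would exhibit an explicit class of distinguished triangles on $\catP$ and $\catPfg$ and verify A.1--A.3 by hand, adapting the approach of Muro--Schwede--Strickland \cite{MSS}. Up to isomorphism the candidate triangles on a free module are direct sums of the trivial triangles $R\xrightarrow{\id}R\to 0\to R$, their rotations, and the canonical cone $R\xrightarrow{\cdot x}R\xrightarrow{\cdot x}R\xrightarrow{\cdot \bar{r}x}R$, with the scalar $\bar{r}$ chosen according to the case at hand (in case (iii) via a lift of $r_x$). The semisimple case (i) is essentially formal since $\m=\{0\}$. The mixed-characteristic case is manageable because the sign appearing in A.2 becomes innocuous modulo $2\in\m$. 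In the equicharacteristic case, the hypotheses on $r_x$ are precisely what is needed both to keep the class closed under rotation and to allow the construction of completing morphisms in A.3.

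The hardest step will be verifying A.3 in case (iii): given a commutative square of morphisms between two direct-sum triangles in the class, one must construct a completion $h$ and then show that the resulting mapping cone decomposes (up to isomorphism) back into the prescribed class. This is where the cubic identity $\sigma_x^3(t)=r_x^{-1}tr_x$ together with $\sigma_x(r_x)=r_x$ is expected to enter decisively, as the obstruction to assembling $h$ from its components on each summand turns out to be measured by a $\sigma_x$-twisted expression that vanishes exactly under those identities. A secondary obstacle, on the necessity side, is upgrading the derived relations to the precise form of $r_x$ in (iii)(a)--(b), for which I expect to run A.3 on morphisms between higher-rank triangles rather than just self-maps of the canonical cone.
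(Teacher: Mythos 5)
Your sufficiency sketch is essentially the paper's route (direct sums of contractible triangles with copies of the canonical generating triangle, following Muro--Schwede--Strickland, with the decomposition of an arbitrary morphism into $\bigl(\begin{smallmatrix}1&0&0\\0&\cdot x&0\\0&0&0\end{smallmatrix}\bigr)$ form as the workhorse lemma). One thing you should make explicit is exactly that decomposition lemma: without it you cannot verify A.1, and the mapping-cone part of A.3 reduces to it. But the outline is sound.

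The necessity sketch, however, contains a genuine error. You assert that running A.3 on a self-map of the canonical triangle forces $\sigma_x([a])=[\bar r]^{-1}[a][\bar r]$ for every $a$, i.e.\ that $\sigma_x$ itself is inner. This cannot be right, and it already contradicts your own target identity $\sigma_x^3(t)=r_x^{-1}tr_x$ in case (iii). Concretely, in the paper's example $R=\F{8}[X,\tau]/(X^2)$ with $\tau$ the Frobenius, one has $d=\F{8}$ commutative, so inner automorphisms of $d$ are trivial, yet $\sigma_x=\tau\neq\id$. What the commutativity of squares 2 and 3 actually force, once you track the composition convention for right multiplications (so $(\cdot a)(\cdot b)=\cdot(ba)$) and use $x\tilde t=\widetilde{\sigma_x(t)}x$, is a relation of the shape $\sigma_x^3([a])=\sigma_x^2([\bar r])^{-1}[a]\,\sigma_x^2([\bar r])$, and you need an additional argument (in the paper: a filler against the rotate with identity outer maps, yielding $\sigma_x([\bar r])=[\bar r]$) before this becomes conjugation by $[\bar r]$ itself. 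Your intermediate claim is one power of $\sigma_x$ short and also omits the fixed-point condition you need to close the loop.

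The route you propose for $2=0$ is likewise not supported: the relation $[\bar r]^2[a]+[a][\bar r]^2=0$ does not drop out of the filler equations I can see, and you give no derivation. The paper's argument is much leaner: since $\Sigma=\Id$, A.2 forces a distinguished triangle to stay distinguished after changing a single sign (not two), and then a single A.3 filler between $\triangle_R$ and its one-sign variant gives a unit $t$ with $xt=x$ and $x=-tx$, hence $x=-x$, hence $2x=0$ and $2\in\m$. I would rework your necessity argument along those lines, getting $2\in\m$ first, then $\sigma_x([\bar r])=[\bar r]$, then the cubic conjugation identity.
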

\begin {proof}
Considering the unique map $\Z\rightarrow R$ we are often going to talk of elements of $\Z$ as elements of $R$. What is meant should be clear from the context. In the first part of the proof we show that $R$ should satisfy one of the upper conditions: \\
If $\m$ is trivial, it is clear that $R$ is a skew field. For $\m\neq\{0\}$ there are two possibilities: $\m$ is generated by some prime number $p\in\Z$, or $R$ contains a field. Indeed, if every prime number $p$ is a unit in $R$, then so is every other element of $\Z$, and therefore $\mathbb Q$ can be embedded in $R$. If $p=0$ in $R$ for some prime $p$, then there is a ringhomomorphism $\mathbb F_p\rightarrow R$ that is a monomorphism. If none of the above cases occurs, then there exists a prime $p$ with $p$ an element of $\m\backslash\{0\}$. Therefore $p$ is a generator of $\m$.\\
Before continuing we want to make the following note: a distinguished triangle in a triangulated category is isomorphic to the triangle that is produced by altering two of the maps by a sign. It is in general not true that changing only one of the signs produces a distinguished triangle, but in our case since the translation functor is the identity, this is also true as a consequence of the second axiom for triangulated categories. Remark \ref{triangles thmPID} allows us to consider the following commutative diagram with distinguished triangles as rows:
$$\xymatrix{
{R}\ar[r]^-{\cdot rx}\ar@{=}[d]&{R}\ar[r]^-{\cdot x}\ar@{=}[d]&{R}\ar[r]^-{\cdot x}\ar@{.>}^{\cdot t}[d]&{R}\ar@{=}[d]\\
{R}\ar[r]^-{\cdot rx}&{R}\ar[r]^-{\cdot x}&{R}\ar[r]^-{\cdot -x}&{R\,,}\\
}$$
By Axiom 4 there is a filler $\cdot t$ completing the above to a morphism of triangles. This implies that $t$ is a unit in $R$ representing the class of $1\in d$, and thus $x=-x$. In the case, that a field $\mathbb F_p$ can be embedded in $R$, we conclude from $2x=0$ that $2$ equals $0$ or in other words $\mathbb F_p=\mathbb F_2$. In the case that the maximal ideal is generated by a prime $p$, $2p=0$ forces $p$ to be $2$.\\
In both of the cases where $R\neq d$, we also have the fillers in the diagrams
$$\xymatrix{
{R}\ar[r]^-{\cdot x}\ar@{=}[d]&{R}\ar[r]^-{\cdot x}\ar@{=}[d]&{R}\ar[r]^-{\cdot rx}\ar@{.>}[d]&{R}\ar@{=}[d]\\
{R}\ar[r]^-{\cdot x}&{R}\ar[r]^-{\cdot rx}&{R}\ar[r]^-{\cdot x}&{R}\\
}\text{ }\text{ }\text{ }\text{ }\text{ }\text{ }\text{ }\text{ }\text{ }\text{ }\text{ }\text{ }\text{ }  
\xymatrix{
{R}\ar[r]^-{\cdot x}\ar[d]^-{\cdot\widetilde{\sigma_x(s)}}&{R}\ar[r]^-{\cdot x}\ar[d]^-{\cdot\tilde{s}}&{R}\ar[r]^-{\cdot rx}\ar@{.>}[d]&{R}\ar[d]^-{\cdot\widetilde{\sigma_x(s)}}\\
{R}\ar[r]^-{\cdot x}&{R}\ar[r]^-{\cdot x}&{R}\ar[r]^-{\cdot rx}&{R\,,}\\
}$$
Here, $x$ still denotes some generating element of $\m$, $s$ is some element of $d$ and $\tilde{s}$ a lift of $s$ in $R$. Call the filler in the left diagram $\cdot t$. We have $xt=rx$ and $tx=rx$, thus $\sigma_x([t])=[r]$ and  $[t]=[r]$. In other words, $\sigma_x([r])$ equals $[r]$. Denote the right filler by $\cdot \tilde {u}$, for $u\in d$. We have the following conditions: $\sigma_x(u)=s$ and $[r]\sigma_x^2(s)=u[r]$. Together they force $\sigma_x^3(u)=[r]^{-1}u[r]$, where $u$ equals $\sigma_x^{-1}(s)$. Since $s$ was an arbitrary element of $d$ and $\sigma_x^{-1}$ is bijective, we conclude that $\sigma_x^3(u)=[r]^{-1}u[r]$ for every $u\in d$. Hence, we are done with the first part of the proof.\\
Actually we showed more than was stated: we showed that under the assumption that a triangulation exists, in both the mixed-characteristic and the equicharacteristic case the properties $a)$ and $b)$ are satisfied for any $x\in\m\backslash\{0\}$ together with some appropriate $r_x$.\\
The second step is to construct a triangulation for each of the three cases. The category of (finite dimensional) vector spaces over a skew field has a unique triangulated structure analogous to vector spaces over a field, that we introduced at the beginning. The other two cases are more interesting. We are going to treat them together. We are going to use the construction from \cite{MSS}, but complete and modify it for our purpose. When possible, we are going to stick to their notation.\\First fix an $x$ and an $r_x$ with the properties $a)$ and $b)$. Note that in case $\m=(2)$, we can take $x=2$ and $r_x=1$ or in general any other $r_x$ that is in the center of $d$. We also have $x=-x$ in both cases. Define distinguished triangles to be triangles isomorphic to a direct sum (finite when working in $\catPfg$) of contractible triangles and triangles $\triangle_R$ of the form: 
$$\xymatrix{{\triangle_R: R}\ar[r]^-{\cdot x}&{R}\ar[r]^-{\cdot x}&{R}\ar[r]^-{\cdot \widetilde{r_x}x}&{R}}$$
For a free $R$-module $M$ we will denote by $\triangle_M$ the triangle
$$\xymatrix{{\triangle_M: M}\ar[r]^-{\cdot x}&{M}\ar[r]^-{\cdot x}&{M}\ar[r]^-{\cdot \widetilde{r_x}x}&{M},}$$
that is a direct sum of the triangles $\triangle_R$ (choice of a basis is involved!).\\
We have to verify the three axioms.

A.1: Distinguished triangles are closed under isomorphisms by definition and the triangle $$\xymatrix{{P}\ar[r]^-{\id}&{P}\ar[r]&{0}\ar[r]&{P}}\\$$ is  contractible for any object $P$.\\ 
Now we need the following lemma:

\begin{lemma}\label{zerlegung} In the above context, for any morphism $f:P\rightarrow Q$ in the category $\catP$ (resp. $\catPfg$), there is a commutative diagram
$$\xymatrix{
{P}\ar[r]^-{f}\ar[d]^-{\cong}&{Q}\ar[d]^-{\cong}\\
{M\oplus N\oplus V}\ar[r]^-{\left(\!\begin{smallmatrix}1&0&0\\0& \cdot x&0\\0&0&0 \end{smallmatrix}\!\right )}&{M\oplus N\oplus W\,,}\\
}$$ for appropriate free modules $M$, $N$, $V$ and $W$.
\end{lemma}
From this lemma it is clear that $f$ is a part of a distinguished triangle as required, since there is a triangle starting with $f$ that is isomorphic to the distinguished triangle
$$\xymatrix@C=1,2cm{{M\oplus N\oplus V}\ar[r]^-{\left(\!\begin{smallmatrix}1&0&0\\0& \cdot x&0\\0&0&0 \end{smallmatrix}\!\right )}&{M\oplus N\oplus W}\ar[r]^-{\left(\!\begin{smallmatrix}0&0&0\\0& \cdot x&0\\0&0&1 \end{smallmatrix}\!\right )}&{V\oplus N\oplus W}\ar[r]^-{\left(\!\begin{smallmatrix}0&0&0\\0& \cdot \widetilde{r_x}x&0\\1&0&0 \end{smallmatrix}\!\right )}&{M\oplus N\oplus V.}
}$$

\begin{proof}[Proof of Lemma \ref{zerlegung}]
We can use Zorn's lemma to determine the largest free submodule of $P$ such that the restriction of $f$ on it is trivial: Let $\mathfrak{W}$ denote the set of all free submodules with this property. As usual, a partial ordering is given by the inclusion of submodules and the empty set is an element of $\mathfrak{W}$. We have to show that for every totally ordered subset $\{W_i\}_{i\in I^\prime}$, the union $\bigcup_{i\in I^\prime}W_i$ is also in $\mathfrak{W}$, i.e. is a free $R$-module. Though a filtered colimit of free modules over an arbitrary ring need in general not be a free module, we are in the special situation where every inclusion of free modules splits (free$=$projective$=$injective), and the complementary module is as a direct summand of a free module a projective$=$free module itself. For a precise argument one needs transfinite induction since we have a possibly uncountable union. One shows that there is a compatible system of bases of the modules $W_i$. The successor case is exactly the fact discussed above. A basis of $W_i$ can be completed to a basis of $W_{i+1}$ since there is a free complement. For the limit case, assumed there are compatible bases for all the modules $W_i$ for $i<\alpha$ for a given limit ordinal $\alpha$, then a basis for $\bigcup_{i<\alpha}W_i$ is given by the union of the bases for $W_i$, where elements of different bases are being identified via the inclusions. Now Zorn's lemma gives us a maximal element, say $V$, which again has a free complement in $P$. Use the same method to split this complement into two modules $M$ and $N$, such that $M$ is maximal free with the property that $f$ is injective on it. Using the maximality of $M$ and $V$ one easily calculates that $f$ restricted on $N$ is (after an appropriate choice of a basis) just multiplication by the element $x$. The statement follows.
\end{proof}
Now back to the second axiom:\\
A.2: A triangle is contractible iff its translate is. The translate of $\triangle_R$ is  
$$\xymatrix{{R}\ar[r]^-{\cdot x}&{R}\ar[r]^-{\cdot \widetilde{r_x}x}&{R}\ar[r]^-{\cdot x}&{R}\\},$$ and it is isomorphic to the original one because of $\sigma_x(r_x)=r_x$:
$$\xymatrix{
{R}\ar[r]^-{\cdot x}\ar@{=}[d]&{R}\ar[r]^-{\cdot x}\ar@{=}[d]&{R}\ar[r]^-{\cdot \widetilde{r_x}x}\ar@{.>}^{\cdot \widetilde{r_x}}[d]&{R}\ar@{=}[d]\\
{R}\ar[r]^-{\cdot x}&{R}\ar[r]^-{\cdot \widetilde{r_x}x}&{R}\ar[r]^-{\cdot x}&{R}\\
}$$
On the other hand, $\triangle_R$ is the translate of the triangle  
$$\xymatrix{{R}\ar[r]^-{\cdot \widetilde{r_x}x}&{R}\ar[r]^-{\cdot x}&{R}\ar[r]^-{\cdot x}&{R}\\},$$ which is also isomorphic to $\triangle_R$ by the same argument.

A.3: We start by showing that any diagram of distinguished triangles of the form
$$\xymatrix{
{A}\ar[r]^-{f}\ar[d]^-{\alpha}&{B}\ar[r]^-{i}\ar[d]^-{\beta}&{C}\ar[r]^-{q}&{A}\ar[d]^-{\alpha}\\
{A^{\prime}}\ar[r]^-{f^{\prime}}&{B^{\prime}}\ar[r]^-{i^{\prime}}&{C^{\prime}}\ar[r]^-{q^{\prime}}&{A^{\prime}}\\
}$$
can be completed to a morphism of triangles. Assume first that one of the rows is contractible. If this is the upper row, we can copy this part in the proof of Theorem 1 of \cite{MSS} without any change. However, there is no possibility of using the duality functor mentioned there for the case where the lower triangle is contractible since we are also working in $\catP$, so we give an explicit construction:\\
Let $(\Theta,\Phi,\Psi)$ denote a nullhomotopy for the lower triangle:
$$\xymatrix{
{A}\ar[r]^-{f}\ar[d]^-{\alpha}&{B}\ar[r]^-{i}\ar[d]^-{\beta}&{C}\ar[r]^-{q}&{A}\ar[d]^-{\alpha}\\
{A^{\prime}}\ar@<0.4ex>[d]^-{1}\ar@<-0.4ex>[d]_-{0}\ar[r]^-{f^{\prime}}&{B^{\prime}}\ar@<0.4ex>[d]^-{1}\ar@<-0.4ex>[d]_-{0}\ar[dl]|-{\Theta}\ar[r]^-{i^{\prime}}&{C^{\prime}}\ar@<0.4ex>[d]^-{1}\ar@<-0.4ex>[d]_-{0}\ar[dl]|-{\Phi}\ar[r]^-{q^{\prime}}&{A^{\prime}}\ar@<0.4ex>[d]^-{1}\ar@<-0.4ex>[d]_-{0}\ar[dl]|-{\Psi}\\
{A^{\prime}}\ar[r]^-{f^{\prime}}&{B^{\prime}}\ar[r]^-{i^{\prime}}&{C^{\prime}}\ar[r]^-{q^{\prime}}&{A^{\prime}}
}$$
Since the rows are exact we can factorize $i^\prime\beta$ through the cokernel of $f$ (in the category of $R$-modules), and then use that $C^\prime$ is injective, to construct a map $\gamma^\prime$ such that $\gamma^\prime i=i^\prime \beta$:
$$\xymatrix@!0@=25pt{
{B}\ar[rr]^-{i}\ar[dd]_-{\beta}\ar@{->>}[dr]&&{C}\ar@{.>}[dd]^-{\gamma^\prime}\\
&{\text{coker}f}\ar@^{(->}[ur]\ar[dr]\\
{B^\prime}\ar[rr]^-{i^\prime}&&{C^\prime}\\
}$$
Now set $\gamma=\gamma^\prime+\Psi(\alpha q-q^\prime\gamma^\prime)$. We have
$$\begin{aligned}
\gamma i&=\gamma^\prime i+\Psi(\alpha qi-q^\prime\gamma^\prime i)=i^\prime\beta\\
q^\prime\gamma&=q^\prime\gamma^\prime+q^\prime\Psi(\alpha q-q^\prime\gamma^\prime)=q^\prime\gamma^\prime+(1-\Theta f^\prime)(\alpha q-q^\prime\gamma^\prime)=\\
&=q^\prime\gamma^\prime+\alpha q-q^\prime\gamma^\prime-\Theta f^\prime\alpha q+\Theta f^\prime q^\prime\gamma^\prime=\alpha q
\end{aligned}$$
Thus, $\gamma$ is the desired map.\\
If both triangles are isomorphic to direct sums $\triangle_R$, then, using the isomorphisms, we are searching a filler in 
$$\xymatrix{
{P}\ar[r]^-{\cdot x}\ar[d]^-{f}&{P}\ar[r]^-{\cdot x}\ar[d]^-{g}&{P}\ar[r]^-{\cdot \widetilde{r_x}x}\ar@{.>}[d]&{P}\ar[d]^-{f}\\
{Q}\ar[r]^-{\cdot x}&{Q}\ar[r]^-{\cdot x}&{Q}\ar[r]^-{\cdot \widetilde{r_x}x}&{Q\,,}\\
}$$
for appropriate $f$ and $g$. We can think of these maps as matrices with entries $g_{ij}$ and $f_{ij}$ respectively, in $R$. Because of the commutativity of the first square we have the relation $f_{ij}x=xg_{ij}$. We define the desired filler by setting $h_{ij}=\widetilde{\sigma_x^{-1}([g_{ij}])}$ for some lift (choose one) of $\sigma_x^{-1}([g_{ij}])$. This definition makes the middle square commute, and we only need to check that the right square commutes. Using the relation between $g_{ij}$ and $f_{ij}$ as well as the properties of $x$ and $r_x$ we calculate
$$\widetilde{r_x}xf_{ij}=\widetilde{r_x}\widetilde{\sigma_x([f_{ij}])}x=\widetilde{r_x}\widetilde{\sigma_x^{2}([g_{ij}])}x=\widetilde{r_x}\widetilde{\sigma_x^3([h_{ij}])}x=\widetilde{[h_{ij}]}\widetilde{r_x}x=h_{ij}\widetilde{r_x}x.$$
We want to make a few comments:
\begin{itemize} 
\item Note that we can alter the filler by adding $\mu(\cdot x) +(\cdot x)\nu$ for any $\mu,\nu:P\rightarrow Q$.
\item It is also worth remarking that if $f$ is an isomorphism, so are $g$ and $h$: Indeed, assumed $f$ is an isomorphism, applying Lemma \ref{zerlegung} on $g$ and using the equality $g(\cdot x)=(\cdot x)f$ one easily sees that $g$ is an isomorphism. The same argument provides that $h$ is also one.
\item Given the map $f$, one can always define $g$ and $h$. Thus, in the case where $f$ is an  isomorphism, we can complete it to an isomorphism of the distinguished triangles (of course this  applies only in this special case where the triangles are isomorphic to direct sums of $\triangle_R$).\\
\end{itemize} 
Now we are ready to proceed and show that the completion can be done such that the mapping cone is distinguished. We have the following situation:
$$\phi=\left(\!\begin{smallmatrix}\phi_{11}&\phi_{12}\\ \phi_{21}&\phi_{22}\end{smallmatrix}\!\right ):X\oplus T\rightarrow Y\oplus U\,,$$
where $X\oplus T$ and $Y\oplus U$ are distinguished triangles with $T$ and $U$ contractible triangles, $X$ and $Y$ isomorphic to direct sums of $\triangle_R$, and $\phi=(\alpha,\beta,\gamma)$ and each of the $\phi_{ij}=(\alpha_{ij},\beta_{ij},\gamma_{ij})$ morphisms of triangles. We should modify $\gamma$ in an appropriate way. Observe that $\phi_{12},\phi_{21}$ and $\phi_{22}$ are nullhomotopic since in each case the source or the  target is contractible. Putting the three homotopies together gives a homotopy from $\left(\!\begin{smallmatrix}\phi_{11}&\phi_{12}\\ \phi_{21}&\phi_{22}\end{smallmatrix}\!\right )$ to $\left(\!\begin{smallmatrix}\phi_{11}&0\\0&0\end{smallmatrix}\!\right)$, which means that the corresponding mapping cones are isomorphic. On the other hand, the cone of $\left(\!\begin{smallmatrix}\phi_{11}&0\\ 0&0\end{smallmatrix}\!\right)$ is itself isomorphic to the direct sum of the mapping cones of $\phi_{11}$, $U$ and the translate of $T$. Since the last two are contractible, our problem reduces to the case $\phi_{11}:X\rightarrow Y$ with $X$ and $Y$ as above. For suitable free modules $M$, $N$, $V$ and $W$ we can represent the map $\alpha_{11}$ as in Lemma \ref{zerlegung}:
$$\xymatrix{{X_1}\ar[r]^-{\alpha_{11}}\ar[d]^-{\cong}&{Y_1}\ar[d]^-{\cong}\\
{M\oplus N\oplus V}\ar[r]^-{\left(\!\begin{smallmatrix}1&0&0\\0& \cdot x&0\\0&0&0 \end{smallmatrix}\!\right )}&{M\oplus N\oplus W}\\
}$$
As we observed, the vertical isomorphisms can be extended to isomorphisms of exact triangles. Thus, we can restrict our investigation to the case:
$$\xymatrix{
{M\oplus N\oplus V}\ar[r]^-{\cdot x}\ar[d]^-{\left(\!\begin{smallmatrix}1&0&0\\0& \cdot x&0\\0&0&0 \end{smallmatrix}\!\right )}&{M\oplus N\oplus V}\ar[r]^-{\cdot x}\ar[d]^-{\bar{\beta}_{11}}&{M\oplus N\oplus V}\ar[r]^-{\cdot \widetilde{r_x}x}\ar[d]^-{\bar{\gamma}_{11}}&{M\oplus N\oplus V}\ar[d]^-{\left(\!\begin{smallmatrix}1&0&0\\0& \cdot x&0\\0&0&0 \end{smallmatrix}\!\right )}\\
{M\oplus N\oplus W}\ar[r]^-{\cdot x}&{M\oplus N\oplus W}\ar[r]^-{\cdot x}&{M\oplus N\oplus W}\ar[r]^-{\cdot \widetilde{r_x}x}&{M\oplus N\oplus W}\\
}$$
Let $\bar{\alpha}_{11}$ and $\delta $ denote the maps from $M\oplus N\oplus V$ to $M\oplus N\oplus W$ given by \linebreak $\bar{\alpha}_{11}=\left(\!\begin{smallmatrix}1&0&0\\0& \cdot x&0\\0&0&0 \end{smallmatrix}\!\right )$ and $\delta=\left(\!\begin{smallmatrix}0&0&0\\0&1&0\\0&0&0 \end{smallmatrix}\!\right )$. The morphism $\bar{\beta}_{11}$ is of the form $\bar{\alpha}_{11}+\Phi(\cdot x)$ for some \linebreak $\Phi:M\oplus N\oplus V\rightarrow M\oplus N\oplus W$ (cf.\ the relation between $f$ and $g$ in the construction of a filler for the case where the rows where direct sums of $\triangle_R$). Now, set $\bar{\gamma}_{11}$ to be $\bar{\beta}_{11}+\Phi(\cdot x)+(\cdot x)\delta+(\cdot x)\Phi=\bar{\alpha}_{11}+(\cdot x)\delta+(\cdot x)\Phi$. We claim that $(\delta,\Phi,0)$ is a homotopy from $(\bar{\alpha}_{11},\bar{\beta}_{11},\bar{\gamma}_{11})$ to the morphism $\zeta=(\epsilon,\epsilon,\epsilon)$, where $\epsilon=\left(\!\begin{smallmatrix}1&0&0\\0&0&0\\0&0&0 \end{smallmatrix}\!\right )$. The three equations for a homotopy hold:
$$\begin{aligned}
\bar{\beta}_{11}-\epsilon &=\Phi(\cdot x)+(\cdot x)\delta\\
\bar{\gamma}_{11}-\epsilon &=\bar{\alpha}_{11}+(\cdot x)\delta+(\cdot x)\Phi-\epsilon=(\cdot x)\Phi=0(\cdot\widetilde{r_x}x )+(\cdot x)\Phi\\
\bar{\alpha}_{11}-\epsilon &=\delta(\cdot x)=\delta(\cdot x)+(\cdot\widetilde{r_x}x )0
\end{aligned}$$
It only remains to observe that the mapping cone of $\zeta$ is isomorphic to the direct sum of $\triangle_N$, $\triangle_{W}$, the translate of $\triangle_N$, the translate of $\triangle_V$ with the mapping cone of the identity on $\triangle_M$, which can be easily checked to be contractible.
\end{proof}
From the two theorems in this section we conclude that $\catP$ is triangulated with translation functor the identity if and only if $R$ satisfies properties $(i)$ and $(ii)$ of Proposition \ref{thmPID} as well as one of the conditions $(i)$, $(ii)$ or $(iii)$ of the above theorem. The analogous statement holds for $\catPfg$ provided $R$ is left or right noetherian.\\

In the proof of the theorem we have given an explicit triangulation for the studied categories. A natural question is whether there are different triangulated structures. In the semisimple case the triangulation is  always unique. This is due to the fact that there every morphism in the category $\catP$ (resp. $\catPfg$) is up to isomorphism of the form $$\xymatrix{{U_1\oplus U_2}\ar[r]^-{\left(\!\begin{smallmatrix} 0&1\\ 0&0\end{smallmatrix}\!\right )}&{U_2\oplus V_1},}$$
and this is part of the triangle 
$$\xymatrix{
{U_1\oplus U_2}\ar[r]^-{\left(\!\begin{smallmatrix} 0&1\\ 0&0\end{smallmatrix}\!\right )}&{U_2\oplus V_1}\ar[r]^-{\left(\!\begin{smallmatrix} 0&0\\ 0&1\end{smallmatrix}\!\right )}&{U_1\oplus V_1}\ar[r]^-{\left(\!\begin{smallmatrix} 0&0\\ 1&0\end{smallmatrix}\!\right )}&{U_2\oplus U_1,}
}$$
which is contractible. Thus, this triangle is distinguished in an arbitrary triangulation. Since every two distinguished triangles starting with the same morphism are (non canonically) isomorphic, and the class of distinguished triangles is closed under isomorphisms, we see that the triangulation is unique.\\
Let us now consider the other two cases. We will denote the triangulation induced by the generating triangle
$$\xymatrix{{R}\ar[r]^-{\cdot x}&{R}\ar[r]^-{\cdot x}&{R}\ar[r]^-{\cdot \widetilde{r_x}x}&{R}}$$ 
by $\bigtriangleup^{r_x}$. In Theorem \ref{thm2} $(iii)$, in order to have a triangulation, we required the existence of an element $r_x$ of $d$ with $\sigma_x(r_x) =r_x$ and $\sigma_x^3(t)=r_x^{-1}tr_x$ for some generator $x$ of $\m$. On the other hand, as we saw in the first part of the proof of the theorem, in both the mixed-characteristic and the equicharacteristic case, a given triangulation defines for every generator $x\in\m\backslash\{0\}$ an element $r_x$ of $d$ that satisfies the above conditions. Clearly, the triangulation induced by $x$ and $r_x$ coincides with the original one. All these facts imply that to compare triangulations it is enough to fix one particular generator of the maximal ideal and observe those elements of $d\backslash\{0\}$, that together with this generator satisfy conditions $a)$ and $b)$ of Theorem \ref{thm2}. In the mixed characteristic case it is reasonable to take $x=2$. We have a map 
$$\begin{aligned}\left\{\text{triangulations of }\catP\,(\text{resp.}\,\catPfg) \right\}&\rightarrow Z(d)\\
{\bigtriangleup }^{r_x} &\mapsto r_x
\end{aligned}$$
where $Z(d)$ denotes the center of the residue field $d$. Indeed, as we mentioned in the proof of the theorem, every element $r_x\in d^\times$ that defines a triangulation together with $x=2$, should be in the center of $d$. In the equicharacteristic case we can also construct such a map. For this we fix a triangulation $\bigtriangleup^{r_x}$ and define
$$\begin{aligned}\left\{\text{triangulations of }\catP\,(\text{resp.}\,\catPfg) \right\}&\rightarrow {Z(d)^{\sigma_x}}.\\
{\bigtriangleup }^{r_x^\prime} &\mapsto r_x^{-1}r_x^\prime
\end{aligned}$$
Here ${Z(d)^{\sigma_x}}$ is the field of fixed points of $Z(d)$ under $\sigma_x$. There is to check that $r_x^{-1}r_x^\prime$ lies in $Z(d)^{\sigma_x}$. We use again conditions (a) and (b) of Theorem \ref{thm2}:
$$\begin{aligned}
&\sigma_x(r_x^{-1}r_x^\prime)=\sigma_x(r_x^{-1})\sigma_x(r_x^\prime)=r_x^{-1}r_x^\prime,&\\
&t=r_x\sigma_x^3(t)r_x^{-1}=r_x^\prime\sigma_x^3(t)(r_x^\prime)^{-1}\,\, &\text{for any}\, t\in d
\end{aligned}$$
The second equation can be written as $r_x^{-1}r_x^\prime\sigma_x^3(t)=\sigma_x^3(t)r_x^{-1}r_x^\prime$ which together with the fact that $\sigma_x$ is an automorphism implies that $r_x^{-1}r_x^\prime$ is a central element of $d$.\\
We are now ready to formulate the results about different triangulations in the mixed- characteristic and the equicharacteristic case:

\begin{cor}\label{bijection1}
Provided $\catP$ $(\text{resp.}\,\catPfg)$ can be triangulated, the above maps induce bijections 
\begin{enumerate}
\item mixed-characteristic case:
$$\left\{\text{triangulations of }\catP\,(\catPfg) \right\}\stackrel{1:1}{\leftrightarrow}\left\{\text{nontrivial elements of }Z(d)\right\}$$
\item equicharacteristic case:
$$\left\{\text{triangulations of }\catP\,(\catPfg) \right\}\stackrel{1:1}{\leftrightarrow}\left\{\text{nontrivial elements of } Z(d)^{\sigma_x}  \right\}$$
\end{enumerate}
\end{cor}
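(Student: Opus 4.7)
The two assignments are already shown (in the paragraphs preceding the corollary) to land in the claimed subsets of $d$. What remains is to check that each map is well-defined --- equivalently, that distinct admissible labels give distinct triangulations --- and that every target element is attained.

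For well-definedness, suppose $\bigtriangleup^{r_1} = \bigtriangleup^{r_2}$ as triangulations. Both $\triangle_R^{r_1}$ and $\triangle_R^{r_2}$ start with $\cdot x$ and are distinguished in the common triangulation; since any two distinguished completions of the same morphism are isomorphic as triangles, we obtain an isomorphism $\triangle_R^{r_1} \cong \triangle_R^{r_2}$. As every $R$-linear map $R \to R$ is right multiplication by a unit, this isomorphism has the form $(\cdot a, \cdot b, \cdot c)$ with $a,b,c \in R^\times$ and fourth component $\cdot a$. Commutativity of the three squares, rewritten via the defining identity $x\tilde{t} = \widetilde{\sigma_x(t)}\,x$, yields
\[
[a] = \sigma_x([b]), \quad [b] = \sigma_x([c]), \quad r_1\,\sigma_x([a]) = [c]\, r_2.
\]
Composing the first two gives $[a] = \sigma_x^2([c])$, and substituting into the third produces $r_1\,\sigma_x^3([c]) = [c]\,r_2$. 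Inserting condition (b) for $r_1$, namely $\sigma_x^3([c]) = r_1^{-1}[c]\,r_1$, collapses the left side to $[c]\,r_1$, so $r_1 = r_2$.

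For surjectivity in case (i), the fact that $x=2 \in Z(R)$ forces $\sigma_2 = \id$, so conditions (a), (b) of Theorem \ref{thm2} reduce to $r_x \in Z(d)$; any such nontrivial element therefore arises as the label of a triangulation via the construction in the proof of that theorem. For case (ii), given $c \in Z(d)^{\sigma_x} \setminus \{0\}$ and the fixed base $r_x$, set $r^\prime := r_x c$. One computes $\sigma_x(r^\prime) = \sigma_x(r_x)\sigma_x(c) = r_x c = r^\prime$ using $\sigma_x(c) = c$, and $(r^\prime)^{-1} t\, r^\prime = c^{-1}r_x^{-1} t\, r_x c = c^{-1}\sigma_x^3(t)\,c = \sigma_x^3(t)$ using centrality of $c$ in $d$; so $r^\prime$ satisfies (a), (b), and $\bigtriangleup^{r^\prime}$ is a triangulation whose image is $r_x^{-1} r^\prime = c$.

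The main technical step is the well-definedness argument, where one must extract the scalar equality $r_1 = r_2$ from an abstract isomorphism of triangles; this requires translating the module-theoretic data into arithmetic in the skew field $d$ through $\sigma_x$, and then applying condition (b) to cancel the residual twist by $\sigma_x^3$. The remaining content is routine verification based on Theorem \ref{thm2}.
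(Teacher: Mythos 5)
Your argument is correct and follows essentially the same route as the paper: the paper likewise proves the corollary by showing that distinct admissible labels yield distinct triangulations and then checks surjectivity exactly as you do (using $\sigma_2=\id$ for $x=2$ in the mixed case, and verifying that $r_xz$ satisfies (a) and (b) for central $\sigma_x$-fixed $z$ in the equicharacteristic case). The only difference is cosmetic: the paper gets the comparison map as a filler $\cdot t$ over the identity on the first two terms via the completion axiom, so $r_1=r_2$ drops out at once, whereas you take a general triangle isomorphism $(\cdot a,\cdot b,\cdot c)$ and cancel the resulting $\sigma_x^3$-twist using condition (b) -- both are legitimate consequences of the axioms.
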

\begin{proof}
In the mixed-characteristic case we have taken $x=2$ as usual. Thus $\sigma_2^3$ is the identity and an element $r_2$ of $d\backslash\{0\}$ defines a triangulation via the generating triangle $$\xymatrix{{R}\ar[r]^-{\cdot 2}&{R}\ar[r]^-{\cdot 2}&{R}\ar[r]^-{\cdot 2\widetilde{r_2}}&{R}}$$
if and only if $r_2$ is in $Z(d)\backslash \{0\}$.\\
On the other hand, two different elements $r_2$ and $r'_2$ of $Z(d)\backslash\{0\}$ define different triangulations. Else there should exist the filler in the diagram
$$\xymatrix{
{R}\ar[r]^-{\cdot 2}\ar@{=}[d]&{R}\ar[r]^-{\cdot 2}\ar@{=}[d]&{R}\ar[r]^-{\cdot 2\widetilde{r_2}}\ar@{.>}[d]&{R}\ar@{=}[d]\\
{R}\ar[r]^-{\cdot 2}&{R}\ar[r]^-{\cdot 2}&{R}\ar[r]^-{\cdot 2\widetilde{r'_2}}&{R,}\\
}$$
which would force $r_2=r'_2$.\\
In the equicharacteristic case in order to have more than one triangulation we again necessarily need at least two different elements $r_x,r^\prime_x$ of $d\backslash\{0\}$ that satisfy the conditions $a)$ and $b)$ with respect to the fixed generator $x$ of $\m$. The same argument as above shows that the induced triangulations are different. Hence we only have to show that the given map is surjective. For this it is enough to see that for every element $z$ of $Z(d)^{\sigma_x}$, $r_xz$ defines a triangulation $\bigtriangleup^{r_xz}$. Indeed, $r_xz$ is a fixed point of $\sigma_x$ since $r_x$ and $z$ are, and moreover $$\sigma_x^3(t)=r_x^{-1}tr_x=z^{-1}r_x^{-1}tr_xz={(r_xz)}^{-1}t(r_xz)$$ holds because $z$ is in the center of $d$.
\end{proof}

We know now whether there are different triangulations or not, but the question remains, if, whenever they exist, these different triangulations are equivalent or even isomorphic? That is, is there an equivalence or even an isomorphism $F$ of the category $\catP$ (resp. $\catPfg$), that together with a natural transformation $\eta:F\Sigma\rightarrow \Sigma F$ maps distinguished triangles of one triangulation to distinguished triangles of another triangulation. To answer these questions in full generality we possibly need a full classification of the occurring rings. We will restrict our attention to determine isomorphic triangulations in a very strong sense: where the functor $F$ is the identity.\\

Let us first make some observations: The center of $R$, $Z(R)$ is a subring of $R$ that is itself local. Denote by $d'$ the residue field of $Z(R)$, then clearly the following diagram commutes:
$$\xymatrix{
{Z(R)}\ar@{->>}[d]\ar@^{(->}[rrr]&&&{R}\ar@{->>}[d]\\
{d'}\ar@^{(->}[r]&{Z(d)^{\sigma_x}}\ar@^{(->}[r]&{Z(d)}\ar@^{(->}[r]&{d}
}$$
We note that $d/Z(d)$ is a skew field extension, and $Z(d)/d'$ and $Z(d)^{\sigma_x}/d'$ are field extensions. 
Using the result of the previous corollary, we want to describe the isomorphic classes of triangulations in the sense discussed above. We claim that the identification is realized by the projection maps
$$\begin{aligned}
Z(d)^\times & \rightarrow & Z(d)^\times/d'^\times\\
(Z(d)^{\sigma_x})^\times & \rightarrow & (Z(d)^{\sigma_x})^\times/d'^\times
\end{aligned}$$
In other words:

\begin{cor}\label{bijection2}
For the equivalence classes of isomorphic triangulations with $F=\Id$ as discussed above, there are the following bijections:
\begin{enumerate}
\item mixed-characteristic case:
$$\begin{aligned}\left\{\text{equivalence classes of triangulations of }\catP\,(\catPfg) \right\} & \stackrel{1:1}{\leftrightarrow}\left\{\text{elements of }Z(d)^\times/d'^\times\right\}\\
\begin{bmatrix}{\bigtriangleup}^{r_x}\end{bmatrix} & \mapsto  [r_x]
\end{aligned}$$
\item equicharacteristic case:
$$\begin{aligned}\left\{\text{equivalence classes of triangulations of }\catP\,(\catPfg) \right\}&\stackrel{1:1}{\leftrightarrow}\left\{\text{elements of }(Z(d)^{\sigma_x})^\times/d'^\times\right\}\\
\begin{bmatrix}{\bigtriangleup}^{r'_x}\end{bmatrix} & \mapsto  [r_x^{-1}r'_x]
\end{aligned}$$
where ${\bigtriangleup}^{r_x}$ is a fixed triangulation.
\end{enumerate}
\end{cor}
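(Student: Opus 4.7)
The plan is to identify the natural isomorphisms of the identity functor on $\catP$ with units of the center $Z(R)$, compute the action of the resulting triangulated functor $(\Id,\eta)$ on the generating triangles $\triangle_R^{r_x}$, and then combine this with Corollary \ref{bijection1}.

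First I would verify that natural isomorphisms $\eta\colon\Id_{\catP}\to\Id_{\catP}$ (respectively $\eta\colon\Id_{\catPfg}\to\Id_{\catPfg}$) are in bijection with $Z(R)^\times$. A left $R$-module endomorphism of $R$ is right multiplication $\cdot z$ by a unique $z\in R$; naturality against the family $\{\cdot r\colon R\to R\mid r\in R\}$ forces $rz=zr$ for every $r$, i.e.\ $z\in Z(R)$. Naturality against the inclusions and projections of direct summands then propagates $\eta_R=\cdot z$ to $\eta_P=\cdot z$ (acting coordinatewise) on every free module $P$. Invertibility of $\eta$ corresponds exactly to $z\in Z(R)^\times$, and since $Z(R)$ is itself local with residue field $d'$, the reduction $Z(R)^\times\to d'^\times$ is surjective.

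Next I would compute the image of the generating triangle $\triangle_R^{r_x}$ under the triangulated functor $(\Id,\eta)$ with $\eta_R=\cdot z$. Because $F=\Id$, only the last arrow is altered, by post-composition with $\eta_R$; the centrality of $z$ together with $\m^2=0$ gives
$$\widetilde{r_x}\,x\,z=\widetilde{r_x}\,z\,x=\widetilde{r_x[z]}\,x,$$
so the image is exactly $\triangle_R^{r_x[z]}$. I would then check that $[z]$ automatically lies in $(Z(d)^{\sigma_x})^\times$: centrality of $[z]$ in $d$ follows from $z\in Z(R)$, and $xz=zx$ together with the defining property of $\sigma_x$ from Lemma \ref{Sigma} yields $\sigma_x([z])=[z]$. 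Thus the values $[z]$ for $z\in Z(R)^\times$ cover precisely $d'^\times$.

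Now by Corollary \ref{bijection1} distinct parameters determine distinct triangulations, so $(\Id,\eta)$ sends the generating triangle of $\triangle^{r_x}$ to the generating triangle of $\triangle^{r'_x}$ if and only if $r'_x=r_x[z]$, i.e.\ $r_x^{-1}r'_x=[z]\in d'^\times$. Every distinguished triangle in $\triangle^{r_x}$ is by definition isomorphic to a direct sum of contractibles and copies of $\triangle_R^{r_x}$, and since both contractibility and direct sums are preserved by $(\Id,\eta)$, the condition on the generating triangle alone suffices for $(\Id,\eta)$ to constitute an equivalence between $\triangle^{r_x}$ and $\triangle^{r'_x}$. Hence $\triangle^{r_x}\sim\triangle^{r'_x}$ precisely when $r_x^{-1}r'_x\in d'^\times$, which yields the stated bijection $Z(d)^\times/d'^\times$ in the mixed-characteristic case and $(Z(d)^{\sigma_x})^\times/d'^\times$ in the equicharacteristic case; well-definedness is immediate from $d'^\times$ being a subgroup of the ambient group. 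The main obstacle will be the rigorous classification of the monoid $\mathrm{End}(\Id_{\catP})$, especially for projectives of uncountable rank in $\catP$; this is handled by reducing via naturality to $\mathrm{End}_R(R)\cong R$ together with the universal property of arbitrary coproducts of free modules.
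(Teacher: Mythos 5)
Your proposal is correct and takes essentially the same route as the paper: both directions hinge on the observation that natural automorphisms $\eta\colon\Id\to\Id$ are right multiplication by units of $Z(R)$, that $(\Id,\eta)$ sends the generating triangle $\triangle^{r_x}$ to $\triangle^{r_x[z]}$, and that Corollary \ref{bijection1} then converts this into the quotient by $d'^\times$. The only cosmetic difference is that you lead with the classification of $\mathrm{End}(\Id_{\catP})$, whereas the paper constructs $\eta$ directly from a central lift and separately extracts $r\in Z(R)$ from a given $\eta$; the two presentations are interchangeable.
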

\begin{proof}
At the beginning we treat $(i)$ and $(ii)$ together.We claim that two nontrivial elements $r_x$ and $r'_x$ of $d$ define, together with the generator $x$ of $\m$, equivalent triangulations if and only if there is a lift of $r_x^{-1}r'_x$ in $R$ that is in $Z(R)$.\\
Assume first $r\in Z(R)$ is a lift of $r_x^{-1}r'_x$. To define a natural transformation $\eta$ choose for every $P\in\catP$ an isomorphism $f_P:P\rightarrow\bigoplus_{I_P}R$ for an appropriate set $I_P$. We insist that in case $P=\bigoplus_{I_P}R$, $f_P$ is the identity. Set $\eta_P:P\rightarrow P$ to be the composite
$$\xymatrix{
{P_{_{_{_{_{_{_{_{}}}}}}}}}\ar@<4pt>[r]^-{f_P}&{\ds{I_P}}\ar@<4pt>[r]^-{\cdot r}&{\ds{I_P}}\ar@<4pt>[r]^-{f_P^{-1}}&{P._{_{_{_{_{_{_{_{}}}}}}}}}
}$$
Note that $\eta_P$ is an isomorphism.\\
Since $r$ is an element of the center of $R$, this gives us a natural transformation $\eta:\Id\circ\Sigma=\Id\rightarrow\Id=\Sigma\circ\Id$. The triangle $$\xymatrix{{R}\ar[r]^-{\cdot x}&{R}\ar[r]^-{\cdot x}&{R}\ar[r]^-{\cdot \widetilde{r_x}x}&{R}}$$ is mapped under $(\Id,\eta)$ to the generating triangle $$\xymatrix{{R}\ar[r]^-{\cdot x}&{R}\ar[r]^-{\cdot x}&{R}\ar[r]^-{\cdot \widetilde{r^\prime_x}x}&{R}}.$$ 
For contractible triangles we have: if $(\Theta,\Phi,\Psi)$ is a nullhomotopy for $$\xymatrix{{P}\ar[r]^-{u}&{Q}\ar[r]^-{v}&{T}\ar[r]^-{w}&{P}},$$ then $(\Theta,\Phi,\Psi\eta_P^{-1})$ is one for $$\xymatrix{{P}\ar[r]^-{u}&{Q}\ar[r]^-{v}&{T}\ar[r]^-{{\eta}_{_P} w}&{P}}.$$
Clearly, direct sums of contractible triangles with triangles isomorphic to sums of generating triangles are also mapped to distinguished triangles under $(\Id,\eta)$. Therefore $(\Id,\eta):(\catP,{\bigtriangleup}^{r_x})\rightarrow(\catP,{\bigtriangleup}^{r_x^\prime})$ is an isomorphism of triangulated categories.\\
Conversely, if a natural transformation $\eta:\Id\rightarrow\Id$ is given, it defines an element $r$ in $Z(R)$ via the map $\eta_R=\cdot r$. If 
$(\Id,\eta):(\catP,{\bigtriangleup}^{r_x})\rightarrow(\catP,{\bigtriangleup}^{r_x^\prime})$ is an isomorphism of triangulated categories, then the generating triangle of ${\bigtriangleup}^{r_x}$ should be mapped to the generating triangle of ${\bigtriangleup}^{r_x'}$. Hence, we see that $r_x[r]$ equals $r'_x$. This forces $[r]=r_x^{-1}r'_x$ and so $r$ is a lift of $r_x^{-1}r'_x$ in the center of $R$.\\
Translating what we showed into other language: in the light of Corollary \ref{bijection1}, the elements of $Z(d)^{\sigma_x}$ that are in the same equivalence class as $r_x$ are precisely those of the form $r_xz$ for $z\in(d')^\times$. This yields the result for the equicharacteristic case. Taking $x=2$ and using $\sigma_2=\id$ gives the one for the mixed-characteristic case.
\end{proof}

A direct result from this corollary is that if the ring $R$ is commutative, then all the triangulations are isomorphic.\\

\section{Examples}
In this section we want to give examples for the rings occurring in Theorem \ref{thm2}. In particular, we are interested in the existence of  non commutative ones. For the semisimple case there are clearly plenty of examples - for every skew field the categories of vector spaces and finitely generated vector spaces are triangulated. We therefore start with the mixed-characteristic case.\\
Recall the ring of Witt vectors $\W{k}$ over a perfect field $k$ of characteristic $p$. It is a commutative discrete valuation ring with maximal ideal $\hat{\m}=p\W{k}$ and residue field $k$. The ring of Witt vectors of length $2$ then is defined as $\Wi{k}:=\W{k}/\hat{\m}^2$, a local ring with only one nontrivial ideal, namely $\bar{\m}=p\Wi{k}$. Another description of $\Wi{k}$ is in terms of vectors $(a_0,a_1)\in k\times k$ where the addition and the multiplication are given by: 
$$\begin{aligned}
(a_0,a_1)+(b_0,b_1)&=(a_0+b_0,a_1+b_1+\dfrac{1}{p}\left ( a_0^p+b_0^p-\left ( a_0+b_0\right )^p\right )\\
(a_0,a_1)\cdot(b_0,b_1)&=(a_0\cdot b_0,{b_0}^pa_1+b_1{a_0}^p)
\end{aligned}$$
For example, for $p=2$ this reduces to 
$$\begin{aligned}
(a_0,a_1)+(b_0,b_1)&=(a_0+b_0,a_1+b_1+{a_0}{b_0})\\
(a_0,a_1)\cdot(b_0,b_1)&=(a_0\cdot b_0,{b_0}^2a_1+b_1{a_0}^2).
\end{aligned}$$
Componentwise application of the Frobenius automorphism
$$\begin{aligned}
Fr:k & \rightarrow k\\
   a & \mapsto a^p
\end{aligned}$$
induces a ring homomorphism 
$$\begin{aligned}
F:\Wi{k}&\rightarrow\Wi{k}\\
(a_0,a_1)&\mapsto ({a_0}^p,{a_1}^p)
\end{aligned}$$ 
that is indeed an isomorphism since $k$ is perfect. If we now take $k$ to be of characteristic $2$, then we immediately obtain a commutative example for (ii). For a non commutative one we take $k$ of characteristic $2$, but different from $\F{2}$, and set $R=\Wi{k}((X,{F}))$, the skew Laurent series ring associated to $\Wi{k}$ and ${F}$. Elements in $R$ are of the form $a=\sum_{i=-n}^{\infty}{a_iX^{i}}$, where $n\in\N$ depends on $a$ and the $a_i$'s are elements of $\Wi{k}$. Addition is done componentwise, multiplication is given by the formula
$$\sum\limits_{i=-n}^{\infty}{a_iX^{i}}\cdot\sum\limits_{i=-m}^{\infty}{b_iX^{i}}=\sum\limits_{k=-n-m}^{\infty}{\Bigl(\sum\limits_{i+j=k}{a_i{F}^i(b_j)}\Bigr)X^{k}}.$$
In other words, we take the usual multiplication in the Laurent series ring except for the fact that elements of $\Wi{k}$ do not commute with $X$, but are multiplied by the rule $Xa_i={F}(a_i)X$. One easily sees that this makes $R$ into a unital non commutative ring. We claim that $R$ is local with maximal ideal generated by $2$: For this purpose we show that $a=\sum_{i=-n}^{\infty}{a_iX^{i}}\in R$ is a unit if and only if one of the $a_i$'s is a unit in $\Wi{k}$. One direction is clear: If all coefficients are non units, i.e., are in the maximal ideal $\bar{m}$ of $\Wi{k}$, we have 
$$\sum\limits_{i=-n}^{\infty}{a_iX^{i}}\cdot\sum\limits_{i=-n}^{\infty}{a_iX^{i}}=\sum\limits_{k=-2n}^{\infty}{\Bigl(\sum\limits_{i+j=k}{a_i{F}^i(a_j)}\Bigr)X^{k}}=0\,,$$
since $F$ maps the maximal ideal $\bar{\m}$ to itself, and therefore for every $k\in\N$ holds \linebreak $\sum_{i+j=k}{a_i{F}^{i}(a_j)}\in{\bar{\m}^2}=\{0\}$. Hence, $a$ cannot be a unit. Now suppose $a_{-n}$ is a unit. We can inductively define the coefficients $b_j$ of $b=\sum_{j=n}^{\infty}{b_jX^j}$ in a way such that $b$ is a right inverse for $a$, and in the same manner one can find a left inverse for $a$. We conclude that $a$ is a unit. If $j>-n$ is minimal with $a_j\in\Wi{k}^\times$, set  $c=\sum_{i=-n}^{j-1}{a_iX^{i}}$ and $d=\sum_{i\geq j}{a_iX^{i}}$. So $a$ equals $c+d$ with $c^2=0$ and $d$ a unit. Note that $(uc)^2$ equals $0$ for any $u\in\Wi{k}((X,F))$ since the coefficients of $uc$ are still in $\bar{\m}$. We have:
$$\begin{aligned}
a(d^{-1}c-1)&=(c+d)(d^{-1}c-1)=\\
&=d(d^{-1}c+1)(d^{-1}c-1)=\\
&=d((d^{-1}c)^2-d^{-1}c+d^{-1}c-1)=\\
&=-d
\end{aligned}$$
Therefore, $a$ is right invertible. Similarly, one shows that it is also left invertible, and so the proof of the claim is done.\\
Now $\bar{\m}=2\Wi{k}$ implies, that the non units of $R$ form a right ideal, namely $\m=2R$ which is of course also a left ideal since $2$ is central, so we conclude that $R$ is local. Finally, we note that we already saw that $\m^2$ is trivial. It is also clear that there are no other nontrivial left or right ideals since this was the case by $\Wi{k}$. Therefore, $\m=xR=Rx$ for any element $x$ of $\m\backslash\{0\}$.\\

In the equicharacteristic case we use a construction similar to the skew Laurent series: the skew polynomials. For a ring $R^\prime$ and a ring homomorphism $\tau:R^\prime\rightarrow R^\prime$, the associated skew polynomials $R^\prime[X,\tau]$ differ from the usual polynomials $R^\prime[X]$ in the definition of the multiplication: It is given, as in the case of the skew Laurent series, by
$$\sum\limits_{i=0}^{n}{a_iX^{i}}\cdot\sum\limits_{i=0}^{m}{b_iX^{i}}=\sum\limits_{k=0}^{n+m}{\Bigl(\sum\limits_{i+j=k}{a_i\tau^i(b_j)}\Bigr)X^{k}}.$$
Take $R^\prime$ to be the field $\F{8}$ and $\tau$ again the Frobenius automorphism. Then we have $\tau^3=\id$. Now set $R=\F{8}[X,\tau]/(X^2)$, where $(X^2)$ denotes the ideal generated by $X^2$. Similar to the previous example one sees that precisely the elements of the form $a_0+a_1X$ mod $(X^2)$, with $a_0\neq 0$, are the units of $R$. Therefore, there is a unique ideal $\m$ in $R$, that is generated by the class of $X$: $\m=R[X]=([X])=[X]R$. Using the notation from Theorem \ref{thm2}, for $x=[X]$ we have $\sigma_x=\tau:\F{8}\rightarrow\F{8}$. Further, $\sigma_x^3$ is the identity map, and one can take $r_x=1$, so all the required conditions are satisfied. Since in this particular example $0$ and $1$ are the only fixed points of $\sigma_x$, we have uniqueness of the triangulation.\\If we exchange $\F{8}$ by $\F{2^6}$ and $\tau$ by $\tau^2$ we obtain another example, this time though, the fixed points of $\sigma_x=\tau^2$ are exactly the elements of $\F{4}$ understood as elements of $\F{2^6}$ via the canonical inclusion. Therefore, we can take $r_x$ to be any element of $\F{4}^\times$. Each of them defines a different triangulation but all these triangulations are isomorphic by Corollary \ref{bijection2}, since every such $r_x$ has a lift in the center of $R$ given by the class $[r_x+0\cdot X]$.\\

Looking back one can ask whether there are different triangulations in the first example. The condition on an element $r\in d\backslash\{0\}$ for defining a triangulation is $r$ to be in the center of $$d=\Wi{k}((X,{F}))/2\Wi{k}((X,{F}))\cong k((X,Fr)).$$
Using the above isomorphism we view elements of $d$ as elements in $k((X,Fr))$. One can calculate that in the case $Fr^i\neq\id$ for every $i\in\Z\backslash\{0\}$ the center consists only of the elements $0$ and $1$. If there exists $q\in\N$ with the property $Fr^q=\id$, then the elements in the center of $d$ are of the form $\sum_{i=-n}^{\infty}a_iX^{iq}$ with $n\in\N$, $a_i\in\{0,1\}$ and $q$ is minimal with the required property. Corollary \ref{bijection1} implies we have different triangulations in this second case, but for every $\sum_{i=-n}^{\infty}a_iX^{iq}$ in $Z(d)^\times$ we can find a lift $\sum_{i=-n}^{\infty}\bar{a}_iX^{iq}$ in the center of $R$. Here, for $a\in k=\Wi{k}/2\Wi{k}$ we take $\bar{a}=(a,0)$. Thus, also here all the triangulations are isomorphic.

\section*{Remarks}
In this article we have treated a special kind of rings, namely local ones. In the commutative case this is not a major restriction since commutative QF rings are products of local QF rings, and so, using the fact that for $R\cong R'\times R''$ holds $\text{Mod}R\cong\text{Mod}R'\times \text{Mod}R''$, one can characterize all the rings $R$ for which the category of projective modules admits a triangulation. The situation of non commutative rings is different. Surely we can also generalize our result for products of local rings. Using the Morita equivalence between a ring $R$ and the full matrix ring $\text{M}_n(R)$ for $n\in\N$ one can also generalize the result for a product of matrix rings over local rings. However, this still doesn't cover all the possibilities of how a QF ring could look like in the general case, so the question if a complete characterization can be done, remains open.\\
At the end we want to remark that our mixed-characteristic case provides us with some additional examples of the so called \emph{exotic} triangulated categories introduced by Muro, Schwede and Strickland in \cite{MSS}. These are categories that are neither algebraic nor topological triangulated (see \cite{Keller06} and \cite{Schwede}). They are characterized by the fact that every object in the category is, as named in \cite{MSS}, \emph{exotic}. This means that for every object $M$ in the category, there is an exact triangle of the form
$$\xymatrix{{M}\ar[r]^-{\cdot 2}&{M}\ar[r]^-{\cdot 2}&{M}\ar[r]&{\Sigma M}}.$$
Hence, it is obvious that in the mixed-characteristic case we are dealing with such exotic triangulated categories. Actually, the examples considered in \cite{MSS} are exactly the categories of finitely generated projective modules over commutative rings $R$, that satisfy the conditions for the mixed-characteristic case.\\

\end{document}